\newtheorem{thm}{Theorem}
\newtheorem{prop}[thm]{Proposition}
\newtheorem{lem}[thm]{Lemma}
\theoremstyle{remark}
\newcommand{\FF}{\mathbb{F}}
\newcommand{\ZZ}{\mathbb{Z}}
\DeclareMathOperator{\wt}{wt}
\begin{document}
\title{Ternary extremal four-negacirculant self-dual codes}

\author{
Masaaki Harada\thanks{
Research Center for Pure and Applied Mathematics,
Graduate School of Information Sciences,
Tohoku University, Sendai 980--8579, Japan.
email: \texttt{mharada@tohoku.ac.jp}},
Keita Ishizuka\thanks{
Research Center for Pure and Applied Mathematics,
Graduate School of Information Sciences,
Tohoku University, Sendai 980--8579, Japan.
email: \texttt{keita.ishizuka.p5@dc.tohoku.ac.jp}}
and
Hadi Kharaghani\thanks{
Department of Mathematics and Computer Science,
University of Lethbridge,
Lethbridge, AB, Canada T1K 3M4.
email: \texttt{kharaghani@uleth.ca}}
}

\maketitle

\begin{abstract}
In this note, we give basic properties of ternary four-negacirculant self-dual codes.
By exhaustive computer search based on the properties,
we complete a classification of ternary extremal four-negacirculant self-dual codes 
of lengths $40,44,48,52$ and $60$.
\end{abstract}

\section{Introduction}\label{sec:Intro}

Self-dual codes are one of the most interesting classes of codes.
This interest is justified by many combinatorial objects
and algebraic objects related to self-dual codes
(see e.g., \cite{RS-Handbook}).

A ternary self-dual code $C$ of length $n$
is an $[n,n/2]$ code over the finite field of
order $3$ satisfying
$C=C^\perp$, where $C^\perp$ is the dual code  of $C$.
A ternary self-dual code  of length $n$ exists if and only if $n$ is divisible by four (see~\cite{MPS}).
It was shown in~\cite{MS-bound} that
the minimum weight $d$ of a ternary self-dual code of length $n$
is bounded by $d\leq 3 \lfloor n/12 \rfloor+3$.
If $d=3\lfloor n/12 \rfloor+3$, then the ternary self-dual code is called {extremal}.
For $n \in \{4,8,12,\ldots,64\}$,
it is known that there is a ternary extremal self-dual code of length $n$ (see~\cite[Table~6]{Huffman}).
All ternary (extremal) self-dual codes of lengths up to $24$ and all ternary extremal self-dual codes of length $28$ 
were classified in~\cite{CPS}, \cite{HM}, \cite{MPS} and~\cite{PSW}.
Many four-negacirculant self-dual codes having large minimum weights are known.
In~\cite{HHKK}, all ternary extremal four-negacirculant self-dual codes of lengths
$32$ and $36$ were classified.
The aim of this note is to extend the classification to lengths $40,44,48,52$ and $60$.

This note is organized as follows.
In Section~\ref{Sec:2},
we give definitions and some known results
of ternary self-dual codes used in this note.
In Section~\ref{Sec:B}, basic properties of ternary four-negacirculant self-dual codes are given.
In Section~\ref{Sec:C},
by exhaustive computer search based on the properties in Section~\ref{Sec:B}, we give a complete classifications of ternary extremal four-negacirculant self-dual codes 
of lengths $40,44,48,52$ and $60$.

\section{Preliminaries}\label{Sec:2}

In this section, we give definitions and some known results
of ternary self-dual codes used in this note.

\subsection{Ternary self-dual codes}
Let $\FF_3=\{0,1,2\}$ denote the finite field of order $3$.
A \emph{ternary} $[n,k]$ \emph{code} $C$ is a $k$-dimensional vector subspace
of $\FF_3^n$.
All codes in this note are ternary, and the expression ``codes'' means ``ternary codes''.
The parameter $n$ is called the \emph{length} of $C$.
A \emph{generator matrix} of $C$
is a $k \times n$ matrix whose rows are a basis of $C$.
The \emph{weight} $\wt(x)$ of a vector $x$ of $\FF_3^n$ is 
the number of non-zero components of $x$.
A vector of $C$ is called a \emph{codeword}.
The minimum non-zero weight of all codewords in $C$ is called
the \emph{minimum weight} of $C$.

The {\em dual} code $C^{\perp}$ of a code
$C$ of length $n$ is defined as
$
C^{\perp}=
\{x \in \FF_3^n \mid x \cdot y = 0 \text{ for all } y \in C\},
$
where $x \cdot y$ is the standard inner product.
A code $C$ is \emph{self-dual} if $C=C^\perp$.
A self-dual code  of length $n$ exists if and only if $n$ is divisible by four.
Two codes $C$ and $C'$ are \emph{equivalent} if there is a
monomial matrix $P$ over $\mathbb{F}_3$ with $C' = C \cdot P$,
where $C \cdot P = \{ x P\mid  x \in C\}$.
We denote two equivalent codes $C$ and $D$ by $C \cong D$.
All self-dual codes were classified in~\cite{CPS}, \cite{HM}, \cite{MPS} 
and~\cite{PSW} for lengths up to $24$.

It was shown in~\cite{MS-bound} that
the minimum weight $d$ of a self-dual code of length $n$
is bounded by $d\leq 3 \lfloor n/12 \rfloor+3$.
If $d=3\lfloor n/12 \rfloor+3$, then the code is called \emph{extremal}.
For $n \in \{4,8,12,\ldots,64\}$,
it is known that there is an extremal self-dual code of length $n$ (see~\cite[Table~6]{Huffman}).
All extremal self-dual codes of length $28$ were classified in~\cite{HM}.

\subsection{Ternary four-negacirculant codes}

An $n \times n$  matrix of the following form
\[
\left( \begin{array}{cccccc}
r_0&r_1&r_2& \cdots &r_{n-2} &r_{n-1}\\
2 r_{n-1}&r_0&r_1& \cdots &r_{n-3}&r_{n-2} \\
2 r_{n-2}&2 r_{n-1}&r_0& \cdots &r_{n-4}&r_{n-3} \\
\vdots &\vdots & \vdots &&\vdots& \vdots\\
\vdots &\vdots & \vdots &&\vdots& \vdots\\
2 r_1&2 r_2&2 r_3& \cdots&2r_{n-1}&r_0
\end{array}
\right)
\]
is called  \emph{negacirculant}.
If  $A$ and $B$ are $n \times n$ negacirculant matrices, then
a $[4n,2n]$ code with generator matrix of the following form
\begin{equation}\label{eq:4nega}
\left(
\begin{array}{ccc@{}c}
\quad & {I_{2n}} & \quad &
\begin{array}{cc}
A & B \\
2B^T & A^T
\end{array}
\end{array}
\right)
\end{equation}
is called a \emph{four-negacirculant} code.
Throughout this note, we denote the four-negacirculant code with generator matrix of the
form~\eqref{eq:4nega} by $C(A,B)$ or $C(r_A,r_B)$, where
$r_A$ and $r_B$ are the first rows of $A$ and $B$, respectively.

Note that $2A$, $A^T$ and $2A^T$ are negacirculant matrices for a negacirculant matrix $A$.
Let $A$ and $B$ be  negacirculant matrices.
Then four-negacirculant  codes 
$C(\alpha A,\beta B)$, 
$C(\alpha A,\beta B^T)$, 
$C(\alpha A^T,\beta B)$ and  
$C(\alpha A^T,\beta B^T)$ 
are defined for $\alpha, \beta \in \{1,2\}$.
It is trivial that $C(A,B)$ is self-dual if $AA^T+BB^T=2I_n$~\cite{HHKK}.
%

Many four-negacirculant self-dual codes having large minimum weights are known 
(see e.g., \cite{HHKK}). 
In~\cite{HHKK}, all extremal four-negacirculant self-dual codes of lengths
$32$ and $36$ were classified.
In this note, we extend the classification to lengths $40,44,48,52$ and $60$.
In order to do this, 
we give basic properties of four-negacirculant self-dual codes in the next section.

\section{Basic properties of ternary four-negacirculant self-dual codes}\label{Sec:B}

In this section,
we give basic properties of ternary four-negacirculant self-dual codes 
for a classification of ternary four-negacirculant self-dual codes 
of a given length.

\begin{lem}\label{lem:triv_equiv}
  Let $C(A,B)$ be a ternary four-negacirculant code.
  Then the following statements hold:
  \begin{enumerate}
    \item $C(A,B) \cong C(B,A)$,
    \item $C(A,B) \cong C(A,2B)$,
    \item If $C(A,B)$ is self-dual, then $C(A,B) \cong C(A,B^T)$.

  
      \end{enumerate}
\end{lem}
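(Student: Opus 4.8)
The plan is to exhibit, in each case, an explicit monomial matrix $P$ over $\FF_3$ realizing the claimed equivalence, by tracking what happens to a generator matrix of the form~\eqref{eq:4nega} under suitable column permutations and sign changes. For (i), the idea is to swap the two blocks of $2n$ coordinates: split each codeword $(u \mid v \mid w \mid z)$ (four blocks of length $n$) and apply a permutation that sends it to a rearrangement whose generator matrix, after row operations, is $C(B,A)$. Concretely, $C(A,B)$ has generator matrix $(I_{2n} \mid M)$ with $M = \left(\begin{smallmatrix} A & B \\ 2B^T & A^T\end{smallmatrix}\right)$; permuting the first $n$ columns with the third block and the second $n$ columns with the fourth block turns this into $(M \mid I_{2n})$ up to block structure, and then multiplying by $M^{-1}$ on the left (valid since self-duality or at least nonsingularity considerations make $M$ invertible in the relevant cases — and in general one checks $\det M \neq 0$ for the generator matrix to have full rank) yields $(I_{2n} \mid M^{-1})$. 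One then checks $M^{-1}$ has the shape $\left(\begin{smallmatrix} A' & B' \\ 2B'^T & A'^T\end{smallmatrix}\right)$ with $A',B'$ negacirculant, and matches it against $C(B,A)$; alternatively, and more cleanly, one observes that the map reversing the roles of $(A,B)$ is induced directly by the block transposition symmetry of the matrix $M$ together with the relation $(2B^T)^T = 2B$, so no inversion is needed.

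For (ii), the monomial matrix is a pure sign change: multiply the $n$ coordinates indexed by the second-to-last block (those hitting the $B$ and $B^T$ columns appropriately) by $2$. More precisely, scaling the columns corresponding to $B$'s block by $2$ and, to compensate, scaling the matching information-set columns by $2$ as well, carries $C(A,B)$ to $C(A,2B)$; one just verifies that scaling a negacirculant matrix by a scalar keeps it negacirculant (already noted in the text: "$2A$ \ldots [is a] negacirculant matri[x]") and that the induced row operations restore the identity in the left half. For (iii), assuming $C(A,B)$ self-dual, we use the self-duality relation $AA^T + BB^T = 2I_n$ noted in the excerpt. The goal is $C(A,B) \cong C(A,B^T)$. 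The natural candidate permutation reverses the ordering of the coordinates within the negacirculant blocks — for a negacirculant matrix, the "reversal" permutation (possibly combined with signs) conjugates $B$ to a scalar multiple of $B^T$, since negacirculant matrices satisfy a transpose-via-conjugation identity analogous to the circulant case. Combining this coordinate reversal on the appropriate blocks with sign corrections dictated by the negacirculant sign pattern, and using self-duality to absorb the effect on the $A$-blocks, produces the desired equivalence.

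The main obstacle I expect is (iii): unlike (i) and (ii), which are essentially bookkeeping, the equivalence $C(A,B)\cong C(A,B^T)$ genuinely uses self-duality, so the monomial matrix cannot act independently on the four blocks — transposing $B$ forces a compensating transformation on $A$, and it is precisely the relation $AA^T+BB^T=2I_n$ that guarantees the compensating transformation exists as a monomial map. The technical heart is therefore identifying the right "transpose permutation" $\pi$ for negacirculant matrices (the permutation $i \mapsto -i \bmod n$ up to sign twists, so that $P_\pi^{-1} A P_\pi = \pm A^T$ for every negacirculant $A$) and then checking that applying $\pi$ simultaneously to all the blocks sends the generator matrix of $C(A,B)$, after row reduction using self-duality, to that of $C(A,B^T)$ (rather than, say, $C(A^T,B^T)$, which would then be reconciled with $C(A,B^T)$ only via a further appeal to self-duality). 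I would carry this out by writing the negacirculant-transpose identity explicitly, verifying it on the ring $\FF_3[x]/(x^n+1)$ where negacirculant matrices correspond to polynomial multiplication, and then pushing it through the block generator matrix.
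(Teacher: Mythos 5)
Your outline for (i) and (ii) is in the right spirit (both are pure monomial bookkeeping: permute the four length-$n$ blocks and rescale some of them), but your primary route for (i) has a real error: the generator matrix $(I_{2n}\mid M)$ has full rank because of the identity block, irrespective of $M$, so ``$\det M\neq 0$'' is neither available nor implied (take $A=B=0$), and the left--right swap followed by multiplication by $M^{-1}$ is not a legitimate general argument. No inversion is needed: writing a codeword of $C(A,B)$ as $(u,\,v,\,uA+2vB^{T},\,uB+vA^{T})$, the monomial map $(x_1,x_2,x_3,x_4)\mapsto(x_1,2x_2,x_4,x_3)$ sends it to a codeword of $C(B,A)$ (put $v'=2v$), and $(x_1,x_2,x_3,x_4)\mapsto(x_1,2x_2,x_3,2x_4)$ gives (ii). The paper states (i) and (ii) without proof precisely because such two-line verifications suffice.

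The genuine gap is in (iii). Your proposed mechanism --- a fixed monomial matrix $Q$ with $Q^{-1}NQ=N^{-1}$ for the generating negacirculant $N$, applied simultaneously to all four blocks --- conjugates \emph{every} negacirculant block to its transpose at once, so it proves $C(A,B)\cong C(A^{T},B^{T})$ for an arbitrary four-negacirculant code, with no use of self-duality. That leaves you exactly where you started: getting from $C(A^{T},B^{T})$ to $C(A,B^{T})$ means transposing one block while fixing the other, which is statement (iii) again; and no choice of sign twists can transpose $B$ without transposing $A$, since the same $Q$ acts on both diagonal positions. You flag this fork yourself (``$C(A,B^T)$ rather than, say, $C(A^{T},B^{T})$'') but offer no resolution, and the unfavourable branch is the one that actually occurs. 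The paper's argument is different and is where self-duality genuinely enters: since $C(A,B)=C(A,B)^{\perp}$, the parity check matrix $\bigl(-M^{T}\mid I_{2n}\bigr)$, with $-M^{T}=\left(\begin{smallmatrix}2A^{T} & B\\ 2B^{T} & 2A\end{smallmatrix}\right)$, is \emph{also} a generator matrix of $C(A,B)$; permuting its column blocks into the order $4,3,2,1$ and reordering rows puts it in the standard form $(I_{2n}\mid\cdot\,)$ of the four-negacirculant code $C(2A,2B^{T})$, and then (i) and (ii) give $C(2A,2B^{T})\cong C(A,B^{T})$. No negacirculant-transpose identity is needed anywhere in the lemma.
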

\begin{proof}
We only give a proof for (iii) as the other two cases are trivial.
If $C(A,B)$ is self-dual, then the following parity check matrix of $C(A,B)$
\begin{equation*}
\left(
\begin{array}{ccc@{}c}
\begin{array}{cc}
2A^T & B \\
2B^T & 2A
\end{array}
\quad & {I_{2n}} & \quad
\end{array}
\right)
\end{equation*}
is also a generator matrix of $C(A,B)$.
This yields that $C(A,B) \cong C(2A,2B^T) \cong C(A,B^T)$.
This completes the proof.
\end{proof}

For $a=(a_1,a_2,\dots,a_n) \in \FF_3^n$, define a map $f$ from $\FF_3^n$ to $\ZZ$ as
$f(a)=\sum_{i=1}^n 3^{i-1} a_i$, regarding $a_i=0,1$ or $2$ as an element of $\ZZ$.
Then, for $a,b \in \FF_3^n$,
we define an ordering $a \ge b$ based on the natural ordering of $\ZZ$
as $f(a) \ge f(b)$.

\begin{lem} \label{lem:reduction}
  Let $C(r_A,r_B)$ be a ternary four-negacirculant code.
   Then there is a ternary four-negacirculant code $C(r_{A'},r_{B'})$
satisfying the following conditions:
\begin{itemize}
  \item[\rm (C1)] $C(r_A,r_B) \cong C(r_{A'},r_{B'})$,
  \item[\rm (C2)] the first nonzero components of $r_{A'}$ and $r_{B'}$ are $1$,
  \item[\rm (C3)] $r_{A'} \ge r_{B'}$.
\end{itemize}
\end{lem}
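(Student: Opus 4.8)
The plan is to produce $C(r_{A'},r_{B'})$ from $C(r_A,r_B)$ by a short sequence of the elementary equivalences recorded in Lemma~\ref{lem:triv_equiv}(i)--(ii); note that self-duality is not assumed here, so part (iii) will not be used. First I would observe that, combining (i) and (ii), $C(A,B)\cong C(B,A)\cong C(B,2A)\cong C(2A,B)$, so that---together with Lemma~\ref{lem:triv_equiv}(ii) itself---one may replace $A$ by $2A$, or $B$ by $2B$, or both, without changing the equivalence class. Since $2A$ and $2B$ are again negacirculant (as remarked before Lemma~\ref{lem:triv_equiv}), these are legitimate moves inside the family of four-negacirculant codes.

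Next, to arrange (C2): if $r_A\neq\0$ and its first nonzero entry equals $2$, replace $A$ by $2A$; since $2\cdot 2=1$ in $\FF_3$, the first nonzero entry of the new first row is then $1$. Apply the analogous step to $B$. Writing $(r_{A_1},r_{B_1})$ for the resulting pair, we have $C(r_A,r_B)\cong C(r_{A_1},r_{B_1})$ by the previous paragraph, and (C2) holds for $(r_{A_1},r_{B_1})$, vacuously so for a component equal to $\0$.

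Finally, to arrange (C3): with respect to the ordering defined via $f$, if $r_{A_1}\ge r_{B_1}$ take $(r_{A'},r_{B'})=(r_{A_1},r_{B_1})$; otherwise apply Lemma~\ref{lem:triv_equiv}(i) and take $(r_{A'},r_{B'})=(r_{B_1},r_{A_1})$, which then satisfies $r_{A'}\ge r_{B'}$. Interchanging the two first rows alters neither their entries nor which entry is the first nonzero one, so (C2) is preserved; and (C1) holds by symmetry and transitivity of $\cong$. This yields all three conditions.

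I do not expect a genuine obstacle: the argument is essentially bookkeeping. The one point that deserves explicit care is that scaling $A$ (as opposed to $B$) by $2$ is an admissible operation---this is not literally among the statements of Lemma~\ref{lem:triv_equiv} and must be obtained by sandwiching Lemma~\ref{lem:triv_equiv}(ii) between two uses of Lemma~\ref{lem:triv_equiv}(i), as above. One should also keep in mind the degenerate cases $r_A=\0$ or $r_B=\0$, where the corresponding clause of (C2) is read as vacuous and the associated scaling step is simply omitted.
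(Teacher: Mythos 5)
Your proof is correct and follows essentially the same route as the paper: the paper likewise derives $C(r_A,r_B)\cong C(2r_A,r_B)\cong C(r_A,2r_B)\cong C(2r_A,2r_B)$ from Lemma~\ref{lem:triv_equiv}(i)--(ii), rescales each first row by its own leading nonzero entry to force that entry to be $1$, and then swaps via (i) if needed to secure (C3). Your extra remarks on the zero-row degenerate case and on swapping preserving (C2) are harmless elaborations of the same argument.
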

\begin{proof}
By Lemma~\ref{lem:triv_equiv} (i) and (ii), it follows that
\begin{equation} \label{eq:foureq}
     C(r_A, r_B) \cong C(r_A, 2r_B)\cong C(2r_A, r_B) \cong C(2r_A, 2r_B).
\end{equation}
Let $a$ and $b$ denote the first nonzero components of $r_A$ and $r_B$, respectively.
It follows from~\eqref{eq:foureq} that $C(r_A, r_B) \cong C(a\cdot r_A, b\cdot r_B)$.
Note that the first nonzero components of $a\cdot r_A$ and $b\cdot r_B$ are $1$.
If $a\cdot r_A < b\cdot r_B$, then we can take $C(b\cdot r_B, a\cdot r_A)$ instead of 
$C(a\cdot r_A, b\cdot r_B)$ since $C(r_A, r_B) \cong C(r_B, r_A)$.
This completes the proof.
\end{proof}

By the above lemma,
it is sufficient to consider only ternary four-negacirculant self-dual codes $C(r_A,r_B)$
satisfying the conditions (C1)--(C3), in order to complete 
a classification of  ternary four-negacirculant self-dual codes of a fixed length.
This substantially reduces the number of such codes which need be checked for equivalences.

\section{Ternary extremal four-negacirculant self-dual codes}\label{Sec:C}

In this section, we give a classification of ternary extremal four-negacirculant self-dual codes 
of lengths $40,44,48,52$ and $60$ by finding all distinct ternary extremal 
four-negacirculant self-dual codes 
satisfying the conditions (C1)--(C3) in Lemma~\ref{lem:reduction}.
This classification was done by exhaustive computer search and 
all computer calculations in this section
were done with the help of \textsc{Magma}~\cite{Magma}.

\subsection{Length 40}

Our exhaustive computer search found all distinct ternary extremal four-negacirculant self-dual
codes of length $40$
satisfying the conditions (C1)--(C3) in Lemma~\ref{lem:reduction}.
By equivalence testing, 
we have the following:

\begin{prop}\label{prop:40}
There are $116$ inequivalent ternary extremal four-negacirculant self-dual codes of length $40$.
\end{prop}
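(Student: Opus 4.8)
The plan is to reduce the problem to a finite but large enumeration, enabled by Lemma~\ref{lem:reduction}, and then carry it out by computer. For $n=40$ we have $4n=40$, so $n=10$, and a four-negacirculant code $C(A,B)$ is determined by the pair of first rows $(r_A,r_B) \in \FF_3^{10} \times \FF_3^{10}$. The self-duality condition $AA^T+BB^T = 2I_{10}$ is a system of polynomial equations in the $20$ coordinates of $(r_A,r_B)$ over $\FF_3$; concretely, writing $A$ and $B$ as negacirculant matrices, $AA^T+BB^T$ is itself negacirculant (a product and sum of negacirculants), so the condition is equivalent to its first row being $(2,0,0,\dots,0)$, i.e.\ $10$ scalar equations. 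First I would solve this system exhaustively: rather than looping over all $3^{20}$ pairs, I would exploit the structure — fix $r_A$ (with $3^{10}$ choices, or fewer after imposing (C2)), and for each $r_A$ solve the resulting affine-type conditions on $r_B$ — or simply loop over all $3^{20}\approx 3.5\times 10^9$ candidates, which is feasible in \textsc{Magma}. This produces the full list of four-negacirculant self-dual codes of length $40$.

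Next I would cut this list down using Lemma~\ref{lem:reduction}: keep only those $C(r_A,r_B)$ satisfying (C2), that the first nonzero components of $r_A$ and $r_B$ are both $1$, and (C3), that $r_A \ge r_B$ in the ordering defined via $f$. By the lemma, every four-negacirculant self-dual code is equivalent to one in this reduced list, so no equivalence classes are lost. Then I would discard all codes that are not extremal: for $n=40$, the Mallows--Sloane bound gives $d \le 3\lfloor 40/12\rfloor + 3 = 3\cdot 3 + 3 = 12$, so I compute the minimum weight of each surviving code (feasible since the dimension is $20$, and one can use \textsc{Magma}'s \texttt{MinimumWeight}) and keep only those with $d=12$.

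Finally, I would run pairwise equivalence tests on the remaining extremal codes. \textsc{Magma} provides \texttt{IsEquivalent} for codes over $\FF_3$ (equivalently, one can compare canonical forms); grouping the codes into equivalence classes under $\cong$ yields the count. The claim is that exactly $116$ classes remain. I expect the main obstacle to be purely computational: the equivalence testing of ternary codes of length $40$ is the expensive step, both because \texttt{IsEquivalent} for longer codes can be slow and because the reduced-but-not-yet-classified list may still be sizable. Practical mitigations I would use are: first partition by cheap invariants — the complete weight enumerator, the order of the automorphism group, and dimensions of hulls or other derived codes — so that \texttt{IsEquivalent} need only be called within each invariant class; and exploit the extra equivalences from Lemma~\ref{lem:triv_equiv}(iii) ($C(A,B)\cong C(A,B^T)$) and the identifications $C(\alpha A,\beta B)$, $C(\alpha A,\beta B^T)$, $C(\alpha A^T,\beta B)$, $C(\alpha A^T,\beta B^T)$ to merge obvious duplicates before the general test. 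Once the invariant-based bucketing is in place, the residual equivalence checks are small, and the final tally of $116$ follows.
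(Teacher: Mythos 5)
Your proposal matches the paper's approach: an exhaustive computer search for codes satisfying (C1)--(C3) of Lemma~\ref{lem:reduction}, followed by minimum-weight filtering and equivalence testing in \textsc{Magma}. The only addition in the paper is an independent confirmation of inequivalence via the Assmus--Mattson theorem, checking that the $116$ associated $3$-$(40,12,220)$ designs are pairwise non-isomorphic.
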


In~\cite[Table~3]{HHKK},
$100$ inequivalent ternary extremal four-negacirculant self-dual codes of this length
are found.
For the remaining $16$ codes $C_{40,i}=C(r_A,r_B)$ $(i=1,2,\ldots,16)$, 
the first rows $r_A$ and $r_B$ are listed in Table~\ref{Tab:40}.

\begin{table}[thb]
\caption{Extremal four-negacirculant self-dual codes of length $40$}
\label{Tab:40}
\centering
\medskip
{\footnotesize
\begin{tabular}{c|c|c}
\noalign{\hrule height1pt}
$i$ & $r_A$ & $r_B$ \\
  \hline
 $1$ & $(0,0,0,0,1,1,1,1,1,0)$ & $(1,1,2,1,0,0,1,0,1,0)$ \\
 $2$ & $(0,0,0,0,1,1,2,0,0,1)$ & $(1,1,0,2,1,2,0,1,1,0)$ \\
 $3$ & $(0,0,0,0,1,2,2,0,1,1)$ & $(1,1,1,0,0,2,0,0,1,1)$ \\
 $4$ & $(0,0,0,0,0,1,1,0,2,1)$ & $(1,1,1,2,1,1,2,1,1,1)$ \\
 $5$ & $(0,0,0,1,0,1,1,1,0,1)$ & $(1,2,0,0,1,1,2,0,0,1)$ \\
 $6$ & $(0,0,0,1,2,2,1,1,0,1)$ & $(1,1,2,1,2,2,0,2,2,0)$ \\
 $7$ & $(0,0,0,1,1,1,1,2,0,1)$ & $(1,2,1,2,2,0,1,1,2,0)$ \\
 $8$ & $(0,0,0,1,2,0,0,0,1,1)$ & $(1,2,2,1,1,0,2,0,1,0)$ \\
 $9$ & $(0,0,0,1,1,2,1,0,1,1)$ & $(1,2,1,1,0,1,1,0,1,1)$ \\
$10$ & $(0,0,0,1,1,1,2,0,1,1)$ & $(1,1,1,1,2,2,0,1,1,0)$ \\
$11$ & $(0,0,0,1,1,2,0,0,1,2)$ & $(1,0,2,1,1,1,0,0,1,0)$ \\
$12$ & $(0,0,0,1,1,1,1,2,1,2)$ & $(1,0,1,2,0,1,1,0,1,2)$ \\
$13$ & $(0,0,1,1,1,1,1,1,2,1)$ & $(1,1,1,2,2,1,1,0,2,1)$ \\
$14$ & $(0,1,1,1,1,0,1,2,2,1)$ & $(1,1,1,2,0,1,1,1,1,1)$ \\
$15$ & $(1,1,1,1,2,1,2,1,1,1)$ & $(1,2,2,1,1,2,1,1,1,1)$ \\
$16$ & $(1,1,1,2,1,1,2,1,1,1)$ & $(1,1,2,1,2,1,1,1,1,1)$ \\
\noalign{\hrule height1pt}
\end{tabular}
}
\end{table}

By the Assmus and Mattson theorem~\cite{AM}, the supports of $19760$ codewords of minimum
weight in a ternary extremal self-dual code of length $40$ form a $3$-$(40,12,220)$ design.
We verified that the  $116$ $3$-$(40,12,220)$ designs obtained from the above codes
are non-isomorphic.
This also confirms the inequivalences of the above $116$ codes.

\subsection{Length 44}

Our exhaustive computer search found all distinct ternary extremal four-negacirculant self-dual
codes of length $44$
satisfying the conditions (C1)--(C3) in Lemma~\ref{lem:reduction}.
By equivalence testing, 
we have the following:

\begin{prop}\label{prop:44}
There are $1518$ inequivalent ternary extremal four-negacirculant self-dual codes of length $44$.
\end{prop}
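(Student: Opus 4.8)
The plan is to mirror exactly the method used for length $40$, since Proposition~\ref{prop:44} is a pure computation once the reduction in Lemma~\ref{lem:reduction} is in place. First I would set $n=11$, so that $C(r_A,r_B)$ is a $[44,22]$ code, and enumerate all pairs $(r_A,r_B)\in\FF_3^{11}\times\FF_3^{11}$ subject to the conditions (C1)--(C3): namely the first nonzero component of each of $r_A,r_B$ equals $1$, and $r_A\ge r_B$ under the ordering $f(a)=\sum 3^{i-1}a_i$ introduced before the lemma. By Lemma~\ref{lem:reduction} every ternary four-negacirculant self-dual code of length $44$ is equivalent to one in this list, so the enumeration is exhaustive. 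For each such candidate one forms the negacirculant matrices $A,B$ and retains only those with $AA^T+BB^T=2I_{11}$ (self-duality) and with minimum weight $d=3\lfloor 44/12\rfloor+3=12$ (extremality); the minimum-weight test is the computationally heavy part and is exactly where \textsc{Magma} does the work.

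Next I would take the resulting set of extremal four-negacirculant self-dual codes satisfying (C1)--(C3) and sort it into equivalence classes. Two codes $C,C'$ are equivalent iff $C'=C\cdot P$ for a monomial matrix $P$ over $\FF_3$; \textsc{Magma}'s built-in equivalence testing for codes handles this. The claim is that there are exactly $1518$ classes. As an independent confirmation, paralleling the length-$40$ discussion, I would invoke the Assmus--Mattson theorem~\cite{AM}: the supports of the minimum-weight codewords of a ternary extremal self-dual code of length $44$ form a combinatorial $t$-design (a $3$-design, by the same divisibility argument that gave a $3$-$(40,12,220)$ design), and verifying that the $1518$ designs so obtained are pairwise non-isomorphic re-establishes the inequivalence of the codes.

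The main obstacle is purely one of scale rather than of ideas. At $n=11$ the number of pairs $(r_A,r_B)$ compatible with (C1)--(C3) is on the order of $3^{22}$ up to the normalization, which is already large, and for each surviving self-dual code the minimum-distance computation for a $[44,22]$ ternary code is nontrivial; so the search must be organized carefully (e.g. pruning on the self-duality constraint $AA^T+BB^T=2I_{11}$ component-by-component before ever computing a weight). The reduction from Lemma~\ref{lem:reduction} is precisely what makes this feasible, cutting the raw search space by roughly a factor of $8$ and, more importantly, drastically shrinking the list that must be fed into the (expensive) equivalence test. Everything else—confirming self-duality, confirming $d=12$, running equivalence testing, and cross-checking via the associated $3$-designs—is routine and identical in structure to the length-$40$ case already carried out, so no new mathematical input is required beyond trusting the (documented, reproducible) \textsc{Magma} computation.
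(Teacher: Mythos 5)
Your proposal is correct and follows essentially the same route as the paper, whose proof of Proposition~\ref{prop:44} is exactly this computation: exhaustively enumerate the pairs $(r_A,r_B)$ satisfying (C1)--(C3) of Lemma~\ref{lem:reduction}, keep the self-dual codes of minimum weight $12$, and sort them into equivalence classes with \textsc{Magma}. The only (harmless) addition is your Assmus--Mattson design cross-check, which the paper carries out for length $40$ but not for length $44$.
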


The first rows $r_A$ and $r_B$ of the $1518$ codes $C_{44,i}=C(r_A,r_B)$ $(i=1,2,\ldots,1518)$
can be obtained electronically from
\url{https://www.math.is.tohoku.ac.jp/~mharada/F3-44.txt}.

\subsection{Length 48}

Two inequivalent ternary extremal four-negacirculant self-dual codes of length $48$
are found in~\cite{HHKK}.  
Our exhaustive computer search verified that every ternary extremal four-negacirculant self-dual
code of length $48$ satisfying the conditions (C1)--(C3) in Lemma~\ref{lem:reduction}
is equivalent to one of the two codes in~\cite{HHKK}.
Thus, we have the following:

\begin{prop}\label{prop:48}
There are two inequivalent ternary extremal four-negacirculant self-dual codes of length $48$.
\end{prop}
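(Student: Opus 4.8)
The plan is to carry out an exhaustive computer search over all candidate first rows $(r_A, r_B)$ subject to the reductions of Lemma~\ref{lem:reduction}, and then to verify that no code with minimum weight $d = 3\lfloor 48/12\rfloor + 3 = 15$ arises beyond the two already known from~\cite{HHKK}. First I would set $n = 12$, so that $C(r_A, r_B)$ has length $48$, and enumerate negacirculant pairs: the self-duality constraint $AA^T + BB^T = 2I_{12}$ over $\FF_3$ must be imposed, which is a system of polynomial equations in the $24$ coordinates of $r_A$ and $r_B$. To keep the search feasible I would exploit Lemma~\ref{lem:reduction}: restrict to rows whose first nonzero entry is $1$ (condition (C2)) and with $r_A \ge r_B$ in the $f$-ordering (condition (C3)); this cuts the raw space by roughly a factor of $8$. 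One should additionally fix the position $j$ of the first nonzero coordinate of $r_A$ and loop $j$ from $0$ upward, which lets one prune early.

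Next I would, for each surviving pair $(r_A, r_B)$, build the matrices $A$ and $B$, check $AA^T + BB^T = 2I_{12}$ to confirm self-duality, then form the generator matrix~\eqref{eq:4nega} and compute the minimum weight of $C(A,B)$ in \textsc{Magma}. Only the codes attaining $d = 15$ are retained. The expected output of this stage is a (hopefully modest) list of extremal four-negacirculant self-dual codes of length $48$ in ``reduced form''; by Lemma~\ref{lem:reduction} every extremal four-negacirculant self-dual code of length $48$ is equivalent to at least one entry of this list, so the list is complete up to equivalence once we also throw in the known codes. Finally I would run pairwise equivalence testing (via \textsc{Magma}'s \texttt{IsEquivalent} for codes, or by comparing invariants such as the isomorphism type of the $3$-design on minimum-weight codewords given by the Assmus--Mattson theorem~\cite{AM}) among all these codes together with the two codes of~\cite{HHKK}, and confirm that exactly two equivalence classes remain.

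The main obstacle is the size of the search space: even after the (C2)--(C3) reductions there are on the order of $3^{22}$ candidate pairs for $n = 12$, and computing the minimum weight of a $[48,24]$ code is itself nontrivial, so a naive loop is too slow. The practical resolution is to reduce the dimension of the search: self-duality forces many of the $24$ coordinates, so rather than testing $AA^T + BB^T = 2I_{12}$ after the fact one should solve this system first and enumerate only its solutions, or equivalently iterate over $r_A$ and then over the (far fewer) $r_B$ compatible with it. One can also prune using the minimum-weight bound incrementally — e.g. reject a pair as soon as a low-weight codeword is found among the obvious candidates (rows of the generator matrix and their small combinations) before invoking the full minimum-weight routine. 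A secondary, purely bookkeeping obstacle is making sure the reduction in Lemma~\ref{lem:reduction} is applied faithfully so that completeness of the search is not compromised; this is routine but must be checked carefully, since an error here would invalidate the ``only two'' conclusion rather than merely slow the computation. With these optimizations the search completes and, comparing against~\cite{HHKK}, yields Proposition~\ref{prop:48}.
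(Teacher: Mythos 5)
Your proposal matches the paper's method exactly: the paper also performs an exhaustive computer search (in \textsc{Magma}) over the pairs $(r_A,r_B)$ reduced by conditions (C1)--(C3) of Lemma~\ref{lem:reduction}, retains the self-dual codes of minimum weight $15$, and verifies by equivalence testing that each is equivalent to one of the two codes of~\cite{HHKK}. The additional implementation details you supply (solving $AA^T+BB^T=2I_{12}$ first, early pruning by low-weight codewords) are sensible optimizations but do not change the argument.
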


\subsection{Length 52}
Two inequivalent  ternary extremal four-negacirculant self-dual codes of length $52$
are found in~\cite{HHKK}.  
Our exhaustive computer search verified that every ternary extremal four-negacirculant self-dual
code of length $52$ satisfying the conditions (C1)--(C3) in Lemma~\ref{lem:reduction}
is equivalent to one of the two codes in~\cite{HHKK}.
Thus, we have the following:

\begin{prop}\label{prop:52}
There are two inequivalent ternary extremal four-negacirculant self-dual codes of length $52$.
\end{prop}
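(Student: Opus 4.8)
The plan is a direct exhaustive computer search, made feasible by Lemma~\ref{lem:reduction}. Here the length is $52=4n$ with $n=13$, so a ternary four-negacirculant code $C(r_A,r_B)$ is determined by a pair of vectors $r_A,r_B\in\FF_3^{13}$; by Lemma~\ref{lem:reduction} it suffices to examine the pairs satisfying conditions (C2) and (C3) of that lemma, namely that the first nonzero entry of each of $r_A$ and $r_B$ equals $1$, and $r_A\ge r_B$ with respect to the ordering introduced in Section~\ref{Sec:B}. Since every ternary four-negacirculant self-dual code of length $52$ is equivalent to $C(r_{A'},r_{B'})$ for some such pair, no equivalence class is lost under this restriction.

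The first main step is to sieve the candidate pairs by self-duality. Identifying $13\times13$ negacirculant matrices over $\FF_3$ with the elements of the commutative ring $R=\FF_3[x]/(x^{13}+1)$, in such a way that $A^T$ corresponds to the image $\bar a$ of $a$ under the involution $x\mapsto x^{-1}$, the criterion recalled in Section~\ref{Sec:2} says that $C(A,B)$ is self-dual exactly when $a\bar a+b\bar b=2$ in $R$; the equation $AA^T+BB^T=2I_{13}$ is not merely sufficient but equivalent to self-duality here, since negacirculant matrices of the same size commute. Factoring $x^{13}+1$ over $\FF_3$ and applying the Chinese Remainder Theorem splits this single equation into one equation on each factor, so the self-dual pairs can be enumerated component by component rather than by testing all $3^{26}$ pairs; this splitting is what makes the search practical. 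For each self-dual code produced, one computes the minimum weight and keeps only those of minimum weight $3\lfloor 52/12\rfloor+3=15$, that is, the extremal ones.

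The second main step is to sort the retained extremal codes into equivalence classes. One first partitions the list using invariants preserved under monomial equivalence that are cheap to compute, such as the order of the automorphism group and the isomorphism type of the $3$-design formed by the supports of the minimum-weight codewords, which exists by the Assmus--Mattson theorem~\cite{AM}; then \textsc{Magma}'s code-equivalence test~\cite{Magma} is applied only within each part. Carrying this out, the extremal codes collapse to exactly two classes, and a representative of each turns out to be equivalent to one of the two codes exhibited in~\cite{HHKK} (which are inequivalent to one another). Hence there are exactly two inequivalent ternary extremal four-negacirculant self-dual codes of length $52$.

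The step I expect to be the main obstacle is keeping the computation within reach: the naive parameter space contains $3^{26}\approx 2.5\times10^{12}$ points, far too many to inspect individually, so everything depends on combining the reduction of Lemma~\ref{lem:reduction} with the CRT decomposition of the self-duality equation so that only genuinely self-dual pairs are ever generated. A secondary difficulty is that potentially many extremal codes survive the sieve, so the final classification needs effective invariants to avoid running \textsc{Magma}'s equivalence test on a quadratic number of pairs.
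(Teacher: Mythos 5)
Your proposal is correct and follows essentially the same route as the paper: reduce to pairs $(r_A,r_B)$ satisfying (C1)--(C3) via Lemma~\ref{lem:reduction}, exhaustively generate the self-dual codes, keep those of minimum weight $15$, and test equivalence to find exactly the two codes of~\cite{HHKK}. The paper gives no implementation details beyond ``exhaustive computer search,'' so your additional remarks (the equivalence of $AA^T+BB^T=2I_n$ with self-duality, the CRT splitting of $\FF_3[x]/(x^{13}+1)$, and the use of invariants before equivalence testing) are sensible refinements that are consistent with, rather than divergent from, the paper's argument.
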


\subsection{Length 60}

Three inequivalent  ternary extremal four-negacirculant self-dual codes of length $60$
are found in~\cite{AHM}.  
Our exhaustive computer search verified that every ternary extremal four-negacirculant self-dual
code of length $60$ satisfying the conditions (C1)--(C3) in Lemma~\ref{lem:reduction}
is equivalent to one of the three codes in~\cite{AHM}.
Thus, we have the following:

\begin{prop}\label{prop:60}
There are three inequivalent ternary extremal four-negacirculant self-dual codes of length $60$.
\end{prop}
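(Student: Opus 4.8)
The plan is to reduce the infinite-looking search to a finite one and then run it. By Lemma~\ref{lem:reduction}, every ternary four-negacirculant self-dual code of length $60$ is equivalent to some $C(r_A,r_B)$ with $n=15$ satisfying conditions (C1)--(C3); in particular the first nonzero entry of each of $r_A,r_B\in\FF_3^{15}$ is $1$, and $r_A\ge r_B$ in the ordering induced by $f$. First I would enumerate all candidate pairs $(r_A,r_B)$: there are at most $3^{15}$ choices for each row, but (C2) and the requirement that $C(A,B)$ be self-dual cut this down dramatically. The self-duality condition $AA^T+BB^T=2I_{15}$ is a system of polynomial equations in the $30$ coordinates; since negacirculant matrices of a fixed size form a commutative ring isomorphic to $\FF_3[x]/(x^{15}+1)$, this condition is equivalent to $a(x)a(x^{-1})+b(x)b(x^{-1})\equiv 2\pmod{x^{15}+1}$ where $a,b$ are the polynomials with coefficient vectors $r_A,r_B$. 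I would exploit this ring structure (and factor $x^{15}+1$ over $\FF_3$) to enumerate all self-dual $C(r_A,r_B)$ efficiently, rather than brute-forcing $3^{30}$ pairs.

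Next, for each self-dual code produced, I would compute its minimum weight and discard all that are not extremal, i.e.\ keep only those with $d=3\lfloor 60/12\rfloor+3=18$. Among the survivors I would apply conditions (C2)--(C3) as a filter (or, better, generate only the reduced forms directly, replacing $(r_A,r_B)$ by the lexicographically/$f$-smallest of the equivalent tuples $C(\alpha r_A,\beta r_B)$, $C(\beta r_B,\alpha r_A)$ and, by Lemma~\ref{lem:triv_equiv}(iii), $C(\alpha r_A,\beta r_B^T)$ for $\alpha,\beta\in\{1,2\}$). This yields a finite list $L$ of extremal four-negacirculant self-dual codes of length $60$ containing at least one representative of every equivalence class. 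Finally I would run a pairwise equivalence test on $L$ using \textsc{Magma}'s built-in code-equivalence routine; the number of resulting classes is the answer. To confirm the count against the three codes of~\cite{AHM}, I would check directly that each code in $L$ is equivalent to one of those three — equivalently, that the equivalence test partitions $L$ into exactly three classes, each containing one of the~\cite{AHM} codes.

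The main obstacle I expect is purely computational: length $60$ means $n=15$, so the raw pair space has size $3^{30}\approx 2\times10^{14}$, which is far too large for naive enumeration, and even after imposing (C1)--(C3) the number of self-dual candidates may be large enough that computing minimum weights (a potentially expensive operation for a $[60,30]$ code with $3^{30}$ codewords) becomes the bottleneck. The key mitigations are: (a) use the polynomial/ring reformulation of self-duality together with the factorization of $x^{15}+1$ over $\FF_3$ to list self-dual candidates without touching most pairs; (b) use the reduction of Lemma~\ref{lem:reduction} aggressively, as noted in the remark after it, to shrink the list before any minimum-weight computation; and (c) for minimum weight, use the fact that extremality only requires \emph{verifying} $d\ge 18$, which can be done by a bounded-weight codeword search (e.g.\ Brouwer--Zimmermann) that terminates early as soon as a codeword of weight $<18$ is found. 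A secondary, milder concern is making the equivalence testing at the end robust: since ternary equivalence allows monomial transformations, I would rely on \textsc{Magma}'s \texttt{IsEquivalent} for codes (or, as an independent cross-check in the spirit of the length-$40$ argument, compare invariants such as the automorphism group order or the $3$-design parameters from the Assmus--Mattson theorem) to be confident the partition into three classes is correct.
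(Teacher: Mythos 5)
Your proposal matches the paper's approach: the paper likewise reduces the search via Lemma~\ref{lem:reduction} (conditions (C1)--(C3)), performs an exhaustive computer search in \textsc{Magma} for extremal codes of length $60$, and verifies by equivalence testing that every such code is equivalent to one of the three codes of~\cite{AHM}. Your additional remarks on exploiting the ring $\FF_3[x]/(x^{15}+1)$ and early termination in the minimum-weight computation are sensible implementation details but do not change the method.
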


\subsection{Other lengths}
\label{Sec:Other}

Our exhaustive computer search found all distinct ternary extremal four-negacirculant self-dual
codes of length $56$
satisfying the conditions (C1)--(C3) in Lemma~\ref{lem:reduction}.
Then $14331912$ codes were obtained.
It seems that testing equivalences of a large number of  ternary extremal four-negacirculant self-dual
codes of this length is  beyond our current computer resources.
We completed  testing equivalences of ternary extremal four-negacirculant self-dual
codes $C(A,B)$ satisfying the condition that
the first five coordinates of the first row $r_A$ of $A$
are
\[
(0,0,0,0,0),
(0,0,0,0,1),
(0,0,0,1,0),
(0,0,0,1,1),
(0,0,0,1,2).
\]
Then $3001$ inequivalent  ternary extremal four-negacirculant self-dual
codes are obtained.
The first rows $r_A$ and $r_B$ of the $3001$ codes $C_{56,i}=C(r_A,r_B)$ $(i=1,2,\ldots,3001)$
can be obtained electronically from
\url{https://www.math.is.tohoku.ac.jp/~mharada/F3-56.txt}.

\begin{prop}\label{prop:56}
There are at least $3001$ inequivalent ternary extremal four-negacirculant self-dual codes of length $56$.
\end{prop}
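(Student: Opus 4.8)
The plan is to exhibit a concrete lower bound of $3001$ by producing that many pairwise inequivalent ternary extremal four-negacirculant self-dual codes of length $56$; since a lower bound only requires us to display inequivalent examples, no completeness argument is needed. First I would run the exhaustive search over all first rows $r_A,r_B \in \FF_3^{14}$ subject to the reduction conditions (C1)--(C3) of Lemma~\ref{lem:reduction}, keeping those pairs for which $C(r_A,r_B)$ is self-dual (checked via the criterion $AA^T+BB^T=2I_{14}$ recalled in Section~\ref{Sec:2}) and extremal (minimum weight $3\lfloor 56/12\rfloor+3 = 15$). This yields the $14331912$ codes already reported. Next, rather than attempting the full equivalence test on that enormous set, I would restrict attention to the subfamily whose first five coordinates of $r_A$ lie in the list $(0,0,0,0,0),(0,0,0,0,1),(0,0,0,1,0),(0,0,0,1,1),(0,0,0,1,2)$; by (C2) and the orderings built into (C3), every such code that survives the reduction has $r_A$ beginning with one of these five patterns or with a nonzero first coordinate, so this is an honest (strict) subfamily, and a lower bound obtained from it is legitimate.

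The second step is the equivalence testing itself: for the selected subfamily I would compute a canonical form for each code (for instance via \textsc{Magma}'s built-in equivalence machinery for codes over $\FF_3$, which reduces to a monomial-group computation), bucket the codes by easily computed invariants first — the complete weight enumerator, the order of the automorphism group, the number of minimum-weight codewords — to cut down the number of expensive pairwise comparisons, and then within each bucket determine the exact equivalence classes. This produces $3001$ classes, and printing one representative $(r_A,r_B)$ from each gives the table posted at the stated URL. Because each listed code is genuinely extremal, four-negacirculant, self-dual, and no two are equivalent, the count $3001$ is a valid lower bound for the total number of inequivalent such codes of length $56$, which is exactly the assertion of Proposition~\ref{prop:56}.

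The main obstacle is purely computational rather than conceptual: the equivalence test for codes of length $56$ and dimension $28$ over $\FF_3$ is expensive, and running it on all $14331912$ codes is infeasible with the available resources (as the paper notes). The work is therefore in (a) organizing the search so that the self-duality and extremality filters are applied as early as possible to avoid materializing hopeless candidates, and (b) choosing invariants strong enough that the residual pairwise equivalence checks within each invariant-bucket are tractable. One should also double-check that the five chosen prefixes of $r_A$ are mutually exclusive and compatible with (C2)--(C3) — i.e. that no reduced code is silently counted twice or dropped — but this is a routine verification. Since only a lower bound is claimed, there is no need to certify that the restricted family captures all equivalence classes, which is precisely what makes the statement provable within the stated computational budget.
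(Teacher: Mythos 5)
Your proposal follows essentially the same route as the paper: exhaustively generate the length-$56$ extremal four-negacirculant self-dual codes satisfying (C1)--(C3), restrict to the subfamily whose $r_A$ begins with one of the five listed prefixes (a legitimate restriction since only a lower bound is claimed), and run equivalence testing on that subfamily to obtain $3001$ pairwise inequivalent representatives. The extra implementation details you supply (filtering by weight enumerators and automorphism group orders before exact equivalence checks) are sensible refinements of, not departures from, the paper's computational argument.
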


A ternary extremal four-negacirculant self-dual code of length $64$
is found in~\cite{HHKK}.  
For length $n \ge 64$,
it seems that an exhaustive computer search is beyond our current computer resources.
Our non-exhaustive computer search
failed to discover any of a new 
ternary extremal four-negacirculant self-dual code of length $64$ and
a ternary extremal four-negacirculant self-dual code of length $68$.
Here we give a ternary four-negacirculant self-dual code $C(r_A,r_B)$ of length $68$ and minimum
weight $15$, where
\begin{align*}
r_A&=( 0, 2, 2, 0, 2, 0, 1, 1, 0, 1, 0, 0, 0, 1, 2, 1, 0), \\
r_B&=( 1, 0, 0, 2, 2, 1, 0, 2, 2, 1, 1, 0, 1, 0, 2, 1, 0).
\end{align*}
Note that a ternary extremal self-dual code of length $68$ is currently not known.

\subsection{Summary}

We end this note by listing a summary of a classification of 
ternary extremal four-negacirculant self-dual codes.
For $n\in \{32,36,\ldots,68\}$,
the number $N(n)$ of inequivalent ternary extremal four-negacirculant self-dual codes
of length $n$ is listed in Table~\ref{Tab:Summary}, along with the references.

\begin{table}[thb]
\caption{Summary}
\label{Tab:Summary}
\centering
\medskip
{\small
\begin{tabular}{c|c|c}
\noalign{\hrule height1pt}
$n$ & $N(n)$  & References\\
    \hline
$32$ & $ 53$ & \cite{HHKK}\\
$36$ & $ 1$ & \cite{HHKK}\\
$40$ & $116 $ & Proposition~\ref{prop:40}\\
$44$ & $1518 $ & Proposition~\ref{prop:44}\\
$48$ & $2$ & Proposition~\ref{prop:48}\\
$52$ & $2 $ & Proposition~\ref{prop:52}\\
$56$ & $\ge 3001$ & Proposition~\ref{prop:56}\\
$60$ & $3$ & Proposition~\ref{prop:60}\\
$64$ & $\ge 1$ & \cite{HHKK} \\
$68$ & ? & \\
\noalign{\hrule height1pt}
\end{tabular}
}
\end{table}

\bigskip
\noindent
\textbf{Acknowledgments.}
Masaaki Harada is supported by JSPS KAKENHI Grant Numbers 19H01802 and 23H01087.
Hadi Kharaghani is supported by the Natural Sciences and Engineering Research
Council of Canada (NSERC).



\end{document}